\documentclass[12pt]{amsart}

\usepackage[dvipdfmx]{graphicx}
\usepackage{cite}
\usepackage{amsthm, amsmath, amssymb, mathtools}
\usepackage[top=45truemm, bottom=45truemm, left=30truemm, right=30truemm]{geometry}

\renewcommand{\epsilon}{\varepsilon}

\numberwithin{equation}{section}
\newtheorem{theorem}{Theorem}[section]
\newtheorem{lemma}[theorem]{Lemma}
\newtheorem{corollary}[theorem]{Corollary}

\newtheorem{proposition}[theorem]{Proposition}

\newtheorem{example}[theorem]{Example}

\theoremstyle{definition}
\newtheorem{remark}[theorem]{Remark}
\newtheorem{notation}[theorem]{Notation}

\newcommand{\diam}{\mathrm{diam}}
\newcommand{\Haus}{\mathrm{dim}_{\mathrm{H}}\:}
\newcommand{\cH}{\mathcal{H}}
\newcommand{\PS}{\mathrm{PS}}

\title[Linear equations with two variables in Piatetski-Shapiro sequences]{Linear equations with two variables\\ in Piatetski-Shapiro sequences}
\author[Kota Saito]{Kota Saito}
\date{}

\address{Kota Saito\\
Graduate School of Mathematics\\ Nagoya University\\ Furo-cho\\ Chikusa-ku\\ Nagoya\\ 464-8602\\ Japan}
\curraddr{}
\email{m17013b@math.nagoya-u.ac.jp}

\subjclass[2010]{Primary:11D04, Secondary:11K55}
\keywords{Piatetski-Shapiro sequence, Hausdorff dimension, Diophantine equation, Diophantine approximation, discrepancy}
\date{}
\begin{document}
\maketitle
\begin{abstract}
For every non-integral $\alpha>1$, the sequence of the integer parts of $n^{\alpha}$ $(n=1,2,\ldots)$ is called the Piatetski-Shapiro sequence with exponent $\alpha$, and let $\PS(\alpha)$ denote the set of all those terms. For all $X\subseteq \mathbb{N}$, we say that an equation $y=ax+b$ is solvable in $X$ if the equation has infinitely many solutions of distinct pairs $(x,y)\in X^2$. Let $a,b\in \mathbb{R}$ with $a\neq 1$ and $0\leq b<a$, and suppose that the equation $y=ax+b$ is solvable in $\mathbb{N}$. We show that for all $1<\alpha<2$ the equation $y=ax+b$ is solvable in $\PS(\alpha)$. Further, we investigate the set of $\alpha \in (s,t)$ so that the equation $y=ax+b$ is solvable in $\PS(\alpha)$ where $2< s <t$. Finally, we show that the Hausdorff dimension of the set is coincident with $2/s$.
\end{abstract}

\section{Introduction}
For all $x\in \mathbb{R}$, we define $\lfloor x\rfloor$ as the integer part of $x$, and $\{x\}$ as the fractional part of $x$. For every non-integral $\alpha >1 $, the sequence $(\lfloor n^\alpha \rfloor )_{n=1}^\infty$ is called the \textit{Piatetski-Shapiro sequence with exponent} $\alpha$, and let $\PS(\alpha)$ be the set of all those terms. For all $X\subseteq \mathbb{N}$, and for all polynomials $f(x_1,\ldots, x_n)$ with real coefficients, we say that an equation $f(x_1,\ldots, x_n)=0$ is solvable in $X$ if the equation has infinitely many solutions $(x_1,\ldots, x_n )\in X^n$ with $\#\{x_1,\ldots ,x_n\}=n$. In this article, we discuss the solvability in $\PS(\alpha)$ of the equation
\begin{equation}\label{Equation-main1}
y=ax+b
\end{equation}
for fixed $a,b\in \mathbb{R}$ with $a\notin\{0,1\}$. Glasscock asserted that if equation \eqref{Equation-main1} is solvable in $\mathbb{N}$, then for Lebesgue almost every $\alpha>1$, it is solvable or not in $\PS(\alpha)$, according to $\alpha<2$ or $\alpha>2$  \cite{Glasscock17, Glasscock20}. In addition, as a corollary, he showed that for Lebesgue almost every $1<\alpha<2$, there are infinitely many $(k,\ell,m)\in \mathbb{N}^3$ such that 
 \begin{equation}\label{Sequence-klm}
 k,\ \ell,\ m,\ k+\ell,\ \ell+m,\ m+k,\ k+\ell+m
\end{equation}
 are in $\PS(\alpha)$ \cite[Corollary~10]{Glasscock17}. It is a long-standing open problem whether there exists $(k,\ell,m)\in \mathbb{N}^3$ such that all terms of \eqref{Sequence-klm} are in $\PS(2)$ which is the set of all squares.     
 
The goal of this article is to put forward the following theorem which is an improvement on Glasscock's result in the case when $0\leq b<a$. Here for all $F\subseteq \mathbb{R}$, we let $\Haus F$ denote the Hausdorff dimension of $F$. We will give this definition in Section~\ref{Section-Preparations}.

\begin{theorem}\label{Theorem-main1}
Let $a,b\in \mathbb{R}$, with $a \neq 1$ and $0\leq b<a$. Assume that the equation $y=ax+b$ is solvable in $\mathbb{N}$. Then for all $1<\alpha <2$, the equation $y=ax+b$ is solvable in $\PS(\alpha)$. Moreover, for all $s,t\in \mathbb{R}$ with $2< s<t$, we have
\[
\dim_{\mathrm{H}}\{\alpha \in (s,t) \colon \text{$y=ax+b$ is solvable in $\PS(\alpha)$} \}=2/s.
\]
\end{theorem}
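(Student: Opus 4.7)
Since $y=ax+b$ has infinitely many solutions in $\mathbb{N}^2$, subtracting two of them forces $a\in\mathbb{Q}$; writing $a=p/q$ in lowest terms, the solutions form a single arithmetic progression $(x_n,y_n)=(qn+r,\,pn+r')$ for $n\ge 1$. Set $\beta=1/\alpha$ and exploit the characterisation $m\in\PS(\alpha) \iff \{-m^{\beta}\}\in[0,\,(m+1)^{\beta}-m^{\beta})$, where $(m+1)^{\beta}-m^{\beta}\sim\beta m^{\beta-1}$. For the range $1<\alpha<2$, the plan is to prove the counting estimate
\[
R(N):=\#\{n\le N :\; qn+r,\,pn+r'\in\PS(\alpha)\}\;\gg\; N^{2\beta-1},
\]
which diverges because $\beta>1/2$. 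Expanding each indicator through a Fourier/bracket identity isolates a main term of size $\beta^{2}N^{2\beta-1}$, plus error terms controlled by exponential sums $S(h,k;N)=\sum_{n\le N} e\bigl(h(qn+r)^{\beta}+k(pn+r')^{\beta}\bigr)$ for $(h,k)\ne(0,0)$. The phase has second derivative of size $\asymp(hq^{\beta}+kp^{\beta})\,n^{\beta-2}$, so Van der Corput's second-derivative test gives enough power savings except for ``resonant'' integer pairs with $hq^{\beta}+kp^{\beta}\approx 0$, which are handled either by truncating away from that line or by a third-derivative estimate. Continuity of the implicit constants in $\beta$ secures uniformity over all $\alpha\in(1,2)$.

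\textbf{Part 2 upper bound.} Define
\[
J(n,m,k):=\bigl\{\alpha\in(s,t):\; m^{\alpha}\in[qn+r,\,qn+r+1),\ k^{\alpha}\in[pn+r',\,pn+r'+1)\bigr\}.
\]
When non-empty, $J(n,m,k)$ is an interval of length $\ll(n\log n)^{-1}$, and the target set is $\limsup_{n}\bigcup_{m,k}J(n,m,k)$. The centres $\alpha^{(1)}_{m}=\log(qn+r)/\log m$ and $\alpha^{(2)}_{k}=\log(pn+r')/\log k$ form two monotone sequences in $(s,t)$ with $\asymp n^{1/s}$ terms and minimum spacing $\gg 1/(n^{1/s}\log n)$. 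Counting pairs with $|\alpha^{(1)}_{m}-\alpha^{(2)}_{k}|\ll(n\log n)^{-1}$ via a Diophantine argument (integer points in a thin tube around the curve $k=m^{\log(pn+r')/\log(qn+r)}$, equivalently $k\approx a^{1/\alpha}m$) yields at most $O(n^{2/s-1}\log n)$ non-empty $J(n,m,k)$ for each $n$. The Hausdorff–Cantelli lemma then delivers
\[
\sum_{n,m,k}|J(n,m,k)|^{t}\;\ll\;\sum_{n}n^{2/s-1-t}\log n\;<\;\infty\quad\text{for every }t>2/s,
\]
whence $\dim_{H}\le 2/s$.

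\textbf{Part 2 lower bound and main obstacle.} For the matching lower bound I would build a generalised Cantor subset $C\subseteq A$: choose a rapidly growing $(N_j)$ and, at stage $j$, place many disjoint $J(n,m,k)$'s with $n\asymp N_j$ inside each surviving parent interval from stage $j-1$. The existence of enough $J$'s inside an arbitrary short $\alpha$-interval is a ubiquity-type statement obtained by running the Part 1 exponential sum bounds on that short interval; the mass distribution principle (or a standard ubiquity/dimension theorem in the Beresnevich–Velani style) then yields $\dim_{H}C\ge 2/s$. The most delicate step across the whole proof is the exponential sum estimate in Part 1: one needs power savings valid for \emph{every} $\alpha\in(1,2)$ rather than almost every, must deal with the resonant pairs $(h,k)$ by higher-derivative tests, and must ensure the same bounds survive the restriction to a short $\alpha$-range so that they can power the ubiquity step of Part 2.
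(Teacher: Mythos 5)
Your proposal takes a genuinely different route from the paper, but the key steps are left as assertions, and at least one of them is precisely the obstruction that Glasscock's work shows cannot be pushed through this way.

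\textbf{Part 1.} You expand the indicator of $qn+r,\,pn+r'\in\PS(\alpha)$ and try to beat the error term by van der Corput on the bilinear phases $h(qn+r)^{\beta}+k(pn+r')^{\beta}$. This is essentially Glasscock's strategy, and the resonant pairs $(h,k)$ with $hq^{\beta}+kp^{\beta}\approx 0$ are exactly why his conclusion holds only for \emph{almost every} $\alpha$, not every $\alpha$: for a positive-dimensional set of $\beta=1/\alpha$ the resonant contribution cannot be beaten by derivative tests alone, and ``truncating away from that line'' throws away the main term for those $\alpha$. You acknowledge the issue but do not resolve it, so the claim ``for all $\alpha\in(1,2)$'' is unsupported. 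The paper sidesteps this entirely by \emph{not} fixing an arithmetic progression in $n$: in Proposition~\ref{Proposition-lower} it takes a rational approximation $p_n/q_n$ of $a^{1/\alpha}$ (which exists for the specific $\alpha$ under consideration, by Dirichlet when $\gamma=2$), looks at $\lfloor(q_nx)^{\alpha}\rfloor$ and $\lfloor(p_nx)^{\alpha}\rfloor$ for $x$ in a dyadic window, and reduces the matter to a \emph{one-variable} equidistribution/discrepancy estimate (Lemma~\ref{Lemma-discrepancy_of_B}) with no resonances. This is where the hypothesis $0\le b<a$ enters: it guarantees the target arc for $\{(q_nx)^{\alpha}/c\}$ has positive length.

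\textbf{Part 2 upper bound.} The Hausdorff--Cantelli covering by $J(n,m,k)$ is plausible in spirit, but the central claim ``at most $O(n^{2/s-1}\log n)$ non-empty $J(n,m,k)$ for each $n$'' is not proved and as a pointwise-in-$n$ statement is suspect: for $s>2$ that quantity tends to $0$, yet when $a^{1/\alpha}\in\mathbb{Q}$ for some $\alpha\in(s,t)$ there must be solutions for infinitely many $n$, so the count cannot vanish for all large $n$. At best this bound could hold on average, which needs a real argument. The paper replaces this delicate counting with a clean inclusion (Proposition~\ref{Proposition-upper}): any solution $(\lfloor q^\alpha\rfloor,\lfloor p^\alpha\rfloor)$ forces $|a^{1/\alpha}-p/q|\ll q^{-\alpha}\le q^{-\beta}$, so the solvability set sits inside $\mathcal{E}((\beta,\gamma),\beta;a)$, a set of $\alpha$'s defined by $\gamma$-approximability of $a^{1/\alpha}$, whose dimension is given by Jarn\'ik's theorem after a $C^1$ change of variables (Lemmas~\ref{Lemma-C1_invariance}--\ref{Lemma-application_Jarnik}).

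\textbf{Part 2 lower bound.} You outline a Cantor/ubiquity construction driven by the (unproved) exponential-sum estimate of Part~1 restricted to short $\alpha$-ranges. The paper again bypasses this: Proposition~\ref{Proposition-lower} gives the reverse inclusion $\mathcal{E}((\beta,\gamma),\gamma;a)\subseteq\{\text{solvable}\}$, and the lower bound $2/(s+\delta)$ then comes for free from the lower-bound half of Jarn\'ik's theorem, which already packages the mass-distribution argument you propose to rebuild. In short, the decisive idea you are missing is the sandwich
\[
\mathcal{E}((\beta,\gamma),\gamma;a)\ \subseteq\ \{\alpha\in(\beta,\gamma):\ y=ax+b\ \text{solvable in }\PS(\alpha)\}\ \subseteq\ \mathcal{E}((\beta,\gamma),\beta;a),
\]
which converts the problem into metric Diophantine approximation of the single number $a^{1/\alpha}$ and lets Dirichlet (for $1<\alpha<2$) and Jarn\'ik (for $\alpha>2$) do the heavy lifting, rather than direct exponential-sum counting over two integer variables.
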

We avail of two main improvements when $0\leq b<a$. Firstly, in the case when $1<\alpha<2$, we arrive at the same conclusion as Glasscock's result even if we replace ``for Lebesgue almost every'' with ``for all''.  Secondly, in the case when $\alpha>2$, his result is equivalent to stating that the set $\{\alpha \in (s,t) \colon \text{$y=ax+b$ is solvable in $\PS(\alpha)$} \}$ and has Lebesgue measure $0$ for all $2< s<t$. However, from Theorem~\ref{Theorem-main1}, we find that the set has a Hausdorff dimension of exactly $2/s$. Hence we can discern more details concerning the geometric structure of the set. We will show Theorem~\ref{Theorem-main1} in Section~\ref{Section-Proof_of_Theorem}. 

From the first improvement, we obtain the following:
\begin{corollary}\label{Corollary-main2}
For all $1<\alpha<2$, there are infinitely many $(k,\ell,m)\in \mathbb{N}^3$ such that all of $k$, $\ell$, $m$, $k+\ell$, $\ell+m$, $m+k$, and $k+\ell+m$ are in $\PS(\alpha)$. 
\end{corollary}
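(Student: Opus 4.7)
The strategy is to exploit the symmetry of the condition by taking $k=\ell=m$: in this case the seven sums
\(
k,\ \ell,\ m,\ k+\ell,\ \ell+m,\ m+k,\ k+\ell+m
\)
collapse to the three values $k$, $2k$, and $3k$, so it suffices to show that, for every fixed $1<\alpha<2$, the set
\[
T_\alpha:=\{k\in\mathbb{N}:\ k,\,2k,\,3k\in\PS(\alpha)\}
\]
is infinite; the triples $(k,k,k)$ with $k\in T_\alpha$ then give infinitely many solutions as required by the corollary.

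Both linear equations $y=2x$ (with $a=2$, $b=0$) and $y=3x$ (with $a=3$, $b=0$) satisfy the hypotheses of Theorem~\ref{Theorem-main1} --- namely $a\neq1$, $0\le b<a$, and solvability in $\mathbb{N}$ --- so that theorem directly furnishes the infinitude of
\[
A_\alpha=\{k:\ k,2k\in\PS(\alpha)\}\quad\text{and}\quad B_\alpha=\{k:\ k,3k\in\PS(\alpha)\}.
\]
Our target set is $T_\alpha=A_\alpha\cap B_\alpha$. Note also that $y=(3/2)x$ is an equally admissible equation (with $a=3/2$, $b=0$, via the integer pair $(2t,3t)$), so the two-variable pairwise information is available in several equivalent forms.

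The main obstacle is that, although $A_\alpha$ and $B_\alpha$ are each infinite, the statement of Theorem~\ref{Theorem-main1} alone does not force their intersection to be infinite; two infinite subsets of $\mathbb{N}$ can be disjoint. To close this gap I would not use Theorem~\ref{Theorem-main1} as a black box but revisit its proof, whose engine is a discrepancy / equidistribution analysis of the fractional parts $\{(an+b)^{1/\alpha}\}$ for $n$ up to a parameter. Applying the same machinery simultaneously to the three fractional parts $\{n^{1/\alpha}\}$, $\{(2n)^{1/\alpha}\}$, and $\{(3n)^{1/\alpha}\}$ yields a quantitative lower bound forcing $|T_\alpha\cap[1,N]|\to\infty$ for every $1<\alpha<2$, from which Corollary~\ref{Corollary-main2} immediately follows.
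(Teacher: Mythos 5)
Your argument takes a genuinely different route from the paper and, as written, has a gap that I do not think can be filled in the way you sketch. The paper's proof of Corollary~\ref{Corollary-main2} is essentially a one-liner: Theorem~\ref{Theorem-main1} shows $y=2x$ is solvable in $\PS(\alpha)$ for \emph{every} $1<\alpha<2$, and Glasscock's proof of \cite[Corollary~10]{Glasscock17}, which invokes Frantzikinakis and Wierdl \cite[Proposition~5.1]{FrantzikinakisWierdl}, shows that solvability of the single equation $y=2x$ in $\PS(\alpha)$ already produces infinitely many triples $(k,\ell,m)$ with all seven sums in $\PS(\alpha)$. No diagonal restriction and no second linear equation are needed.

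Your reduction to $k=\ell=m$ forces $k,\,2k,\,3k\in\PS(\alpha)$ simultaneously, and you correctly observe that infinitude of $A_\alpha$ and $B_\alpha$ does not give an infinite intersection. The problem is that the proposed fix, namely running the discrepancy argument ``simultaneously,'' does not reach all of $(1,2)$. Tracing the mechanism of Proposition~\ref{Proposition-lower}, your simultaneous version would need, for infinitely many $q$, rational approximations
\[
\left|2^{1/\alpha}-\frac{p_1}{q}\right|\leq \frac{1}{q^{\gamma}},\qquad \left|3^{1/\alpha}-\frac{p_2}{q}\right|\leq\frac{1}{q^{\gamma}},
\]
with the \emph{same} denominator $q$ and with some $\gamma>\alpha$. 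For a single target Dirichlet's theorem gives $\gamma=2$, which is why the paper's argument covers the whole range $1<\alpha<2$. For two simultaneous targets Dirichlet only gives $\gamma=3/2$, so your construction would only cover $1<\alpha<3/2$. A density heuristic shows the same bottleneck: the expected number of $k\leq N$ with $k,\,2k,\,3k\in\PS(\alpha)$ is of order $\sum_{k\leq N}k^{3(1/\alpha-1)}$, which diverges only when $\alpha<3/2$. So the missing step is not a routine parallel application of the two-variable machinery; to reach all of $(1,2)$ you would need the Frantzikinakis--Wierdl input the paper uses, or some other genuinely multi-dimensional idea that avoids the simultaneous-approximation obstruction.
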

  In the proof  of Corollary~1 in \cite{Glasscock17}, Glasscock applies the result given by Frantzikinakis and Wierdl \cite[Proposition~5.1]{FrantzikinakisWierdl}, and shows that if $y=2x$ is solvable in $\PS(\alpha)$ for fixed $1<\alpha<2$, then there are infinitely many $(k,m,\ell)\in \PS(\alpha)^3$ such that all terms of \eqref{Sequence-klm} are in $\PS(\alpha)$. By this proof, we obtain Corollary~\ref{Corollary-main2} by Theorem~\ref{Theorem-main1}.
  
We next discuss the solvability in $\PS(\alpha)$ of the equation
\begin{equation}\label{Equation-main2}
ax+by=cz
\end{equation}
for fixed $a,b,c\in \mathbb{N}$. As a corollary of Theorem~\ref{Theorem-main1}, the following holds:
\begin{corollary}\label{Corollary-ax+by=cz}
Let $a,b,c\in \mathbb{N}$ with $\gcd(a,c)|b$ and $a>b$. Then, for all $1<\alpha<2$, the equation $ax+by=cz$ is solvable $\PS(\alpha)$. Further, for all $2< s<t$, we have
\begin{equation}\label{Inequality-dimension}
\Haus \{\alpha \in (s,t) \colon \text{$ax+by=cz$ is solvable in $\PS(\alpha)$} \} \geq \frac{2}{s}.
\end{equation}
\end{corollary}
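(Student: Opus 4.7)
The plan is to reduce to Theorem~\ref{Theorem-main1} by fixing the middle variable. Substituting $y = 1$ in the equation $ax+by=cz$ reduces it to the two-variable equation
\[
z = \frac{a}{c}\, x + \frac{b}{c},
\]
to which I would apply Theorem~\ref{Theorem-main1} with coefficients $a' = a/c$ and $b' = b/c$. The three hypotheses of that theorem are routine to check: $a' \neq 1$ because otherwise $\gcd(a,c) = a$ would divide $b$, contradicting $0 < b < a$; the inequality $0 \leq b' < a'$ is immediate from $0 \leq b < a$; and solvability in $\mathbb{N}$ is equivalent to the Bezout condition $\gcd(a,c) \mid b$, with infinitely many positive solutions obtained by translating any integer solution along the direction $(c/\gcd(a,c), a/\gcd(a,c))$.

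Theorem~\ref{Theorem-main1} then yields, for every $1 < \alpha < 2$, infinitely many pairs $(x, z) \in \PS(\alpha)^2$ with $x \neq z$ satisfying $cz = ax + b$. Since $\lfloor 1^\alpha \rfloor = 1 \in \PS(\alpha)$, completing these to triples $(x, 1, z)$ produces solutions of $ax + by = cz$ in $\PS(\alpha)^3$. Only finitely many such triples can fail the distinctness requirement $\#\{x, 1, z\} = 3$ (each of $x = 1$ or $z = 1$ uniquely determines the other coordinate), so infinitely many admissible triples remain, proving the first claim.

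For the Hausdorff dimension statement, the same substitution shows that
\[
\{\alpha \in (s,t) \colon z = (a/c)x + b/c \text{ is solvable in } \PS(\alpha)\}
\subseteq
\{\alpha \in (s,t) \colon ax + by = cz \text{ is solvable in } \PS(\alpha)\}.
\]
Monotonicity of Hausdorff dimension, combined with the value $2/s$ established for the left-hand set by the second part of Theorem~\ref{Theorem-main1}, then delivers the required inequality~\eqref{Inequality-dimension}.

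The proof carries no genuine obstacle: it is a direct reduction. The only care needed is verifying that $a \neq c$ follows from the arithmetic hypotheses (to license the application of Theorem~\ref{Theorem-main1}) and the brief distinctness check after setting $y = 1$.
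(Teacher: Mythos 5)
Your proof is correct and follows essentially the same reduction as the paper: substitute $y=1=\lfloor 1^\alpha\rfloor$, divide by $c$ to get $z=(a/c)x+(b/c)$, verify the hypotheses $a/c\neq 1$ and $0\leq b/c<a/c$ together with solvability in $\mathbb{N}$, and apply Theorem~\ref{Theorem-main1} plus monotonicity of Hausdorff dimension. You are somewhat more explicit than the paper in spelling out why $a\neq c$ and in checking that only finitely many of the resulting triples $(x,1,z)$ violate the distinctness requirement, but these are routine verifications the paper leaves implicit rather than a different route.
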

Indeed, from the condition $\gcd(a,c)|b$, the equation $ax+b=cz$ is solvable in $\mathbb{N}$. By dividing both sides by $c$, we have the equation $z=(a/c)x+(b/c)$ whose coefficients $a/c$ and $b/c$ satisfy the conditions in Theorem~\ref{Theorem-main1}. Moreover, if the equation $ax+b=cz$ is solvable in $\PS(\alpha)$, then by letting $y=1=\lfloor 1^\alpha \rfloor$, we see that the equation $ax+by=cz$ is solvable in $\PS(\alpha)$. Therefore we conclude Corollary~\ref{Corollary-ax+by=cz} from Theorem~\ref{Theorem-main1}.

 In \cite{MatsusakaSaito}, it is proved that for all $a,b,c\in \mathbb{N}$ and $s,t\in \mathbb{R}$ with $2<s<t$, the left-hand side of \eqref{Inequality-dimension} is greater than or equal to
\begin{align*}
\begin{cases}
\displaystyle  \left(s + \frac{s^3}{(2+\{s\}-2^{1-\lfloor s\rfloor} )(2-\{s\}) } \right)^{-1}  & \text{if $a=b=c$}\\[10pt]
\displaystyle 2\left(s+\frac{s^3}{(2+\{s\}-2^{1-\lfloor s\rfloor} )(2-\{s\}) } \right)^{-1} & \text{otherwise}.
\end{cases}
\end{align*}
The lower bounds \eqref{Inequality-dimension} in Corollary~\ref{Inequality-dimension} are better than the above for all $2<s<t$. In particular, we find that the left-hand side of \eqref{Inequality-dimension} goes to $1$ as $s\rightarrow 2+0$ from Corollary~\ref{Inequality-dimension} if $a,b,c$ are restricted.

\begin{notation}
Let $\mathbb{N}$ be the set of all positive integers, $\mathbb{Z}$ be the set of all integers, $\mathbb{Q}$ be the set of all rational numbers, and $\mathbb{R}$ be the set of all real numbers. For all $x\in \mathbb{R}$, let $\lceil x \rceil$ denote the minimum integer $n$ such that $x\leq n$. For all $a,b\in\mathbb{Z}$, we say $a|b$ if $a$ is a divisor of $b$, and let $\gcd(a,b)$ denote the greatest common divisor of $a$ and $b$.  For all sets $X$, let $\# X$ denote the cardinality of $X$. For all sets $X$ and $\Lambda$, let $X^\Lambda$ denote the set of all sequences $(x_\lambda)_{\lambda \in \Lambda}$ composed of $x_\lambda\in X$ for all $\lambda\in \Lambda$.  Define $\mathrm{e}(x)$ by $e^{2\pi \sqrt{-1}x} $ for all $x\in \mathbb{R}$.
\end{notation}

\section{Preparations}\label{Section-Preparations}

A sequence $(x_n)_{n=1}^\infty\in \mathbb{R}^\mathbb{N}$ is called \textit{uniformly distributed modulo 1} if for every $0\leq a<b\leq 1$, we have
\begin{equation}\label{Equation-udm1}
\lim_{N \rightarrow \infty} \frac{\#\left\{ n\in \mathbb{N}\cap[1,N] \colon \{x_n\} \in [a,b) \right\}}{N}=b-a.
\end{equation}
For further details on uniform distribution theory, see the book written by Kuipers and Niederreiter \cite{KuipersNiederreiter}. It is useful to calculate the decay of higher-order derivatives of $f(x)$ in order to verify that a sequence $(f(n))_{n\in \mathbb{N}}$ is uniformly distributed modulo $1$. For example, by \cite[Theorem~3.5]{KuipersNiederreiter}, let $k$ be a positive integer, and let $f(x)$ be a function defined for $x \geq  1$, which is $k$ times differentiable for $x \geq  x_0$. If $f^{(k)}(x)$ tends monotonically to $0$ as $x\rightarrow \infty$ and if $\lim_{x\rightarrow \infty} x| f^{(k)} (x) |= \infty$, then the sequence $(f (n))_{n\in \mathbb{N}}$ is uniformly distributed modulo $1$. By this theorem, we have 

\begin{example}\label{Example-udm1}For all $A\in \mathbb{R}\setminus \{0\}$ and non-integral $\alpha>1$, the sequence $(An^\alpha)_{n=1}^\infty$ is uniformly distributed modulo $1$.
\end{example}

\begin{proof}
Let $f(x)=Ax^{\alpha}$, and $k=\lfloor \alpha \rfloor+1$. Then $f^{(k)}(x)$ tends monotonically to $0$. In addition, there exists $C=C(\alpha)>0$ such that $|xf^{(k)}(x)|\geq C x^{\alpha-\lfloor \alpha \rfloor}\rightarrow \infty$ as $x\rightarrow\infty$. Therefore the sequence $(f(n))_{n=1}^\infty$ is uniformly distributed modulo $1$.
\end{proof}

To show Theorem~\ref{Theorem-main1}, it is not sufficient to only consider the uniform distribution of sequences. We also need to analyze the convergence speed of \eqref{Equation-udm1}. Let  $(x_n)_{ n=1}^ N$ be a sequence composed of $x_n\in \mathbb{R}$ for all $1\leq n\leq N$. We define the \textit{discrepancy} of $(x_n)_{n=1}^N$ by
\[
\mathcal{D}((x_n)_{n=1}^N )=\sup_{0\leq a<b\leq 1} \left| \frac{\#\left\{ n\in \mathbb{N}\cap[1,N] \colon \{x_n\} \in [a,b) \right\}}{N}-  (b-a)  \right|.
\]
We can find upper bounds of the discrepancy from evaluating exponential sums by the following inequality: for all $m\in \mathbb{N}$ 
\begin{equation}\label{Inequality-Erdos_Turan}
\mathcal{D}((x_n)_{n=1}^N)\leq    \frac{6}{m+1}
+\frac{2}{\pi}\sum_{h=1}^m \left(\frac{1}{h}-\frac{1}{m+1}\right) \left|\frac{1}{N} \sum_{n=1}^N \mathrm{e}( hx_n) \right|.  
\end{equation}
This is called the Erd\H{o}s-Tur\'an inequality, a proof of which can be found in \cite[Theorem~2.5 in Chapter~2]{KuipersNiederreiter}. If we can write $hx_n=f(n)$ in \eqref{Inequality-Erdos_Turan} for some smooth real function $f(x)$, then we can evaluate the right-hand side of \eqref{Inequality-Erdos_Turan} by the following lemma.
\begin{lemma}[van der Corput's $k$-th derivative test]\label{Lemma-van_der_Corput}
Let $f(x)$ be real and have continuous derivatives up to the $k$-th order, where $k\geq 4$. Let $\lambda_k \leq f^{(k)} (x) \leq h \lambda_k$ $($or the same as for $-f^{(k)}(x))$. Let $b-a\geq 1$. Then there exists $C(h,k)>0$ such that 
\[
\left|\sum_{a<n\leq b} \mathrm{e} (f(n)) \right| \leq C(h,k) \left((b-a)\lambda_k^{1/(2^k-2)}+(b-a)^{1-2^{2-k}}\lambda_k^{-1/(2^k-2)} \right).
\] 
\end{lemma}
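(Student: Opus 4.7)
The plan is to prove the bound by induction on $k$, starting from the second derivative test as the base case and reducing case $k$ to case $k-1$ by Weyl--van der Corput differencing; the hypothesis $k\ge 4$ in the statement is a convenient threshold for the author's application, but the induction runs equally well starting from $k=2$.

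For the base case $k=2$, assume $\lambda_2\le f''(x)\le h\lambda_2$ on $[a,b]$, so $f'$ is strictly monotonic and varies at rate at least $\lambda_2$. Partition $(a,b]$ into $O((b-a)\lambda_2+1)$ sub-intervals on each of which $f'$ varies by at most $1$; the Kuzmin--Landau estimate (after a further splitting by the distance of $f'(n)$ to the nearest integer) gives, on each piece,
\[
\Bigl|\sum_{a<n\le b}\mathrm{e}(f(n))\Bigr|\ll_h (b-a)\lambda_2^{1/2}+\lambda_2^{-1/2},
\]
which matches the claim at $k=2$ since $2^2-2=2$ and $1-2^{0}=0$.

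For the inductive step $k-1\Rightarrow k$, apply the Weyl--van der Corput inequality
\[
\Bigl|\sum_{n=1}^N z_n\Bigr|^2\le \frac{N+H}{H+1}\sum_{|j|\le H}\Bigl(1-\frac{|j|}{H+1}\Bigr)\sum_n z_{n+j}\overline{z_n}
\]
with $z_n=\mathrm{e}(f(n))$ and an integer parameter $1\le H\le b-a$. The term $j=0$ contributes $\asymp N^2/H$ to the squared sum, while for $j\neq 0$ the inner sum equals $\sum_n\mathrm{e}(g_j(n))$ with $g_j(x)=f(x+j)-f(x)$; by the mean value theorem applied to $f^{(k-1)}$ one has $g_j^{(k-1)}(x)=jf^{(k)}(\xi)$, so $j\lambda_k\le g_j^{(k-1)}(x)\le jh\lambda_k$. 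Applying the inductive bound to each $g_j$ with $\lambda_{k-1}=j\lambda_k$, summing over $1\le j\le H$, taking a square root, and optimizing $H$ to balance the $N^2/H$ term against the principal contribution yields the recursion $p_k=2p_{k-1}+2$ with $p_2=2$, hence $p_k=2^k-2$; a parallel bookkeeping for the secondary term gives the factor $(b-a)^{1-2^{2-k}}$ via the recursion $q_k=(1+q_{k-1})/2$ with $q_2=0$.

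The main obstacle is verifying that the optimal $H$ at each stage lies in the admissible range $[1,b-a]$---this is why the hypothesis $b-a\ge 1$ is imposed and must be preserved under iteration---and confirming that the accumulated constant depends only on $h$ and $k$, not on $f$ or $[a,b]$. It is also essential to retain the $j=0$ contribution when squaring Weyl--van der Corput: discarding it produces the weaker recursion $p_k=2p_{k-1}$, which would degrade the exponent $1/(2^k-2)$ to the strictly worse $1/2^{k-1}$ and destroy the sharpness of the stated bound.
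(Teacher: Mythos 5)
The paper does not prove this lemma; the stated proof is a one-line citation to Titchmarsh \cite[Theorem~5.13]{Titchmarsh}. Your sketch is a correct outline of the standard proof found in that reference (and in Graham--Kolesnik, Iv\'ic, etc.): ground the induction at $k=2$ in the second-derivative test via Kuzmin--Landau, then iterate Weyl--van der Corput differencing. The exponent bookkeeping is right: $g_j^{(k-1)}=jf^{(k)}(\xi)$ by the mean value theorem feeds $\lambda_{k-1}=j\lambda_k$ into the inductive hypothesis, and balancing the resulting bound against the $j=0$ term $N^2/H$ yields $p_k=2p_{k-1}+2$ (hence $p_k=2^k-2$) and $q_k=(1+q_{k-1})/2$ (hence $q_k=1-2^{2-k}$). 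The issues you flag — keeping the optimal $H$ in the admissible range $[1,b-a]$ (and handling the boundary cases where it falls outside), verifying the constant depends only on $h$ and $k$, and not discarding the $j=0$ contribution lest the recursion degrade to $p_k=2p_{k-1}$ — are precisely the points a full write-up must address; a further small one is that the summand $g_j$ lives on the shorter interval $(a,b-j]$, which is why one typically restricts $H\le (b-a)/2$ or absorbs the discrepancy trivially. So the proposal is correct and reconstructs the proof the paper implicitly relies on.
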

\begin{proof}
See the book written by Titchmarsh \cite[Theorem~5.13]{Titchmarsh}.
\end{proof}

We next introduce the Hausdorff dimension. For every $U\subseteq \mathbb{R}$, state the diameter of $U$ as $\diam(U)=\sup_{x,y\in U}|x-y|$. Fix $\delta>0$. For all $F\subseteq \mathbb{R}$ and $s\in (0,1]$, we define
\[
\cH_{\delta}^s (F)=\inf \left\{\sum_{j=1}^\infty \diam(U_j)^s\colon F\subseteq \bigcup_{j=1}^\infty U_j,\  \diam(U_j)\leq \delta  \text{ for all }j\in \mathbb{N}  \right\},
\]
and $\cH^s(F)=\lim_{\delta\rightarrow +0} \cH_\delta^s(F)$ is called the $s$-\textit{dimensional Hausdorff measure} of $F$. Further, we define the \textit{Hausdorff dimension} of $F$ by
\[
\Haus F=\inf \{s\in (0,1] \colon \mathcal{H}^s(F)=0 \}.
\]
From the definition, the following basic properties hold: 
\begin{itemize}
\item (monotonicity) for all $F\subseteq E \subseteq \mathbb{R}$, $\Haus F \leq \Haus E$;
\item (bi-Lipschitz invariance) let $f:F\to \mathbb{R}$ be a bi-Lipschitz map, that is, there exist $C_1,C_2>0$ such that $C_1|x-y|\leq |f(x)-f(y)|\leq C_2|x-y|$ for all $x,y\in F$. Then $\Haus F=\Haus f(F)$.
\end{itemize}
Comprehensive details concerning fractal dimensions can be found in the book by Falconer \cite{Falconer}. By the second property and the mean value theorem, we immediately obtain
\begin{lemma}\label{Lemma-C1_invariance}
Let $U\subseteq \mathbb{R}$ be an open set and let $V\subseteq U$ be a compact set. Let $f:U\to \mathbb{R}$ be a continuously differentiable function satisfying $|f'(x)|>0$ for all $x\in V$. Then for all $F\subseteq V$, $\dim_{\mathrm{H}} f(F)=\dim_{\mathrm{H}} F$.
\end{lemma}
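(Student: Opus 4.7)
My plan is to reduce to the stated bi-Lipschitz invariance by working locally: since $V$ need not be convex or even connected, one cannot simply apply the mean value theorem to arbitrary pairs of points in $V$, so I will cover $V$ by finitely many open intervals contained in $U$ on which $f$ is bi-Lipschitz, and then assemble the conclusion from the countable stability of Hausdorff dimension.

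First I would observe that because $|f'|$ is continuous on the compact set $V$ and strictly positive there, the quantity $m := \min_{x\in V}|f'(x)|$ satisfies $m>0$. For each $x\in V$, using continuity of $f'$ and openness of $U$, I pick an open interval $I_x\subseteq U$ containing $x$ on which $m/2 \leq |f'(y)| \leq 2|f'(x)|+1$; by compactness of $V$, finitely many of these, say $I_{x_1},\ldots,I_{x_N}$, already cover $V$. Since each $I_{x_j}$ is an interval lying in $U$, the mean value theorem applied to any two points $y,z\in I_{x_j}$ gives
\[
\tfrac{m}{2}|y-z|\leq |f(y)-f(z)|\leq (2|f'(x_j)|+1)|y-z|,
\]
so $f$ restricted to $F\cap I_{x_j}$ is a bi-Lipschitz map onto its image. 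The bi-Lipschitz invariance recorded above then yields $\dim_{\mathrm{H}} f(F\cap I_{x_j}) = \dim_{\mathrm{H}}(F\cap I_{x_j})$ for every $j$.

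To finish, I would use that $F = \bigcup_{j=1}^{N}(F\cap I_{x_j})$ and $f(F) = \bigcup_{j=1}^{N} f(F\cap I_{x_j})$, together with the countable stability of Hausdorff dimension (an immediate consequence of the definition of $\cH^s$ and monotonicity): $\dim_{\mathrm{H}}\bigl(\bigcup_k E_k\bigr) = \sup_k \dim_{\mathrm{H}} E_k$. Taking the maximum over $j$ on both sides gives $\dim_{\mathrm{H}} F = \max_j \dim_{\mathrm{H}}(F\cap I_{x_j}) = \max_j \dim_{\mathrm{H}} f(F\cap I_{x_j}) = \dim_{\mathrm{H}} f(F)$, as required.

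There is really no substantive obstacle here; the only point requiring mild care is that $V$ may fail to be connected or convex, which is precisely what forces the finite open-cover step rather than a one-shot application of the mean value theorem to the whole set.
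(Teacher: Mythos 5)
Your proof is correct and follows the same route the paper intends: the paper dispatches this lemma in one line, citing only bi-Lipschitz invariance and the mean value theorem. You have supplied the detail that the paper elides---since $V$ need not be an interval, you first cover it by finitely many intervals on which $f$ is bi-Lipschitz and then invoke countable stability of Hausdorff dimension---and that is exactly the right way to make the paper's sketch rigorous.
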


For all $\gamma\geq 2$ and sets $X\subseteq  \mathbb{R}$, define
\begin{align*}
\mathcal{A}(X,\gamma)=\{x\in X \colon &\text{there are infinitely many $(p,q) \in \mathbb{Z}\times \mathbb{N} $}\\
 &\text{such that } \left|x-\frac{p}{q} \right|\leq \frac{1}{q^\gamma}\ \}.
\end{align*}
In particular, if $X=\mathbb{R}$ and $\gamma=2$, we know that $\mathcal{A}(\mathbb{R},2)=\mathbb{R}$. This result is called Dirichlet's approximation theorem. In addition, in the case when $\gamma>2$ and $X=[0,1]$, the following result is known:

\begin{theorem}[Jarn\'{i}k's theorem]\label{Theorem-Jarnik}
For all $\gamma> 2$, we have $\dim_{\mathrm{H}} \mathcal{A}([0,1],\gamma)=2/\gamma.$
\end{theorem}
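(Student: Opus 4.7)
The plan is to establish the two inequalities separately. For the upper bound $\Haus \mathcal{A}([0,1],\gamma) \leq 2/\gamma$, I would use a direct covering argument: for each $Q \in \mathbb{N}$,
\[
\mathcal{A}([0,1], \gamma) \subseteq \bigcup_{q \geq Q}\bigcup_{p=0}^{q}\left[\frac{p}{q}-\frac{1}{q^\gamma},\ \frac{p}{q}+\frac{1}{q^\gamma}\right],
\]
where each interval has diameter at most $2Q^{-\gamma}$. For any $s > 2/\gamma$, the sum of the $s$-th powers of the diameters is bounded by $\sum_{q \geq Q}(q+1)(2/q^\gamma)^s \ll \sum_{q \geq Q} q^{1-s\gamma}$, which tends to $0$ as $Q \to \infty$ since $s\gamma > 2$. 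Hence $\cH^s(\mathcal{A}([0,1], \gamma)) = 0$, giving $\Haus \mathcal{A}([0,1], \gamma) \leq 2/\gamma$.

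For the lower bound, fix $s < 2/\gamma$; the goal is to construct a Cantor-like compact subset $K \subseteq \mathcal{A}([0,1], \gamma)$ with $\Haus K \geq s$. I would choose a rapidly growing sequence of positive integers $(Q_n)_{n\geq 0}$, with $Q_{n+1}$ a sufficiently large power of $Q_n$ depending on $s$ and $\gamma$, and inductively define families of pairwise disjoint closed intervals $\mathcal{F}_n$ starting from $\mathcal{F}_0 = \{[0,1]\}$: each $I \in \mathcal{F}_n$ is replaced by the collection of children $[p/Q_{n+1} - Q_{n+1}^{-\gamma},\, p/Q_{n+1} + Q_{n+1}^{-\gamma}]$ whose centres $p/Q_{n+1}$ lie in the middle half of $I$. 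Since consecutive centres are spaced at distance $1/Q_{n+1}$ and each $I$ has length $2Q_n^{-\gamma}$, each parent yields roughly $N_{n+1} \asymp Q_{n+1} Q_n^{-\gamma}$ disjoint children once $Q_{n+1}$ is large enough. The set $K := \bigcap_n \bigcup \mathcal{F}_n$ is compact and contained in $\mathcal{A}([0,1], \gamma)$, because every $x \in K$ satisfies $|x - p/Q_n| \leq Q_n^{-\gamma}$ for some $p$ at every stage $n$.

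To lower-bound $\Haus K$, I would equip it with the natural probability measure $\mu$ that distributes mass uniformly among children at each stage and invoke the mass distribution principle. The main obstacle is the verification of $\mu(U) \leq C|U|^s$ for every short interval $U$, which I would handle by a case analysis on the generation $n$ for which $2Q_{n+1}^{-\gamma} \leq |U| < 2Q_n^{-\gamma}$; on such $U$ one bounds $\mu(U)$ by the product of $\mu_{n+1} := \prod_{k=1}^{n+1}N_k^{-1}$ and the number of generation-$(n+1)$ intervals meeting $U$, which is of order $O(\max(1,\, |U|\, Q_{n+1}))$. Balancing these bounds against $|U|^s$ forces a precise growth rate on $(Q_n)$, and the condition $s < 2/\gamma$ is exactly what makes this balancing feasible (the ``$2$'' coming from the length $2Q_n^{-\gamma}$ of each interval and the ``$\gamma$'' from the exponent). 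Once the estimate holds, the mass distribution principle yields $\Haus K \geq s$, and letting $s \nearrow 2/\gamma$ completes the proof.
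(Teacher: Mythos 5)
The paper itself does not prove Jarn\'ik's theorem; it simply cites \cite[Theorem~10.3]{Falconer}, so your proposal is a genuine attempt at a proof rather than a reproduction of the paper's argument. Your upper-bound half is correct and is the standard covering argument. The lower-bound half, however, has a real gap: a Cantor-type construction in which the generation-$(n+1)$ intervals are all centred at fractions with the \emph{single} denominator $Q_{n+1}$ cannot give dimension $2/\gamma$. With centres $p/Q_{n+1}$ spaced $1/Q_{n+1}$ apart inside a parent of length $\asymp Q_n^{-\gamma}$, each parent has only $N_{n+1}\asymp Q_{n+1}Q_n^{-\gamma}$ children; then $\mu_{n+1}=\prod_{k\le n+1}N_k^{-1}\asymp Q_{n+1}^{-1}\prod_{k\le n}Q_k^{\gamma-1}$, and when you feed this into your case $|U|\asymp Q_{n+1}^{-\gamma}$ the mass-distribution inequality $\mu_{n+1}\ll|U|^s\asymp Q_{n+1}^{-\gamma s}$ forces $s\le 1/\gamma$ (optimised over fast-growing $(Q_n)$; slower growth does worse, not better). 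So this construction proves only $\Haus \mathcal{A}([0,1],\gamma)\ge 1/\gamma$. Your heuristic that the ``$2$'' in $2/\gamma$ comes from the factor $2$ in the interval length $2Q_n^{-\gamma}$ is a red herring: multiplicative constants never affect Hausdorff dimension.

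The missing idea is that at each generation one must use centres $p/q$ with the denominator $q$ ranging over a whole band, e.g.\ all $q\in(Q_{n+1},2Q_{n+1}]$ (or all primes in that range). The number of such fractions in an interval of length $\ell$ is $\asymp \ell\,Q_{n+1}^{2}$, not $\asymp\ell\,Q_{n+1}$, so the child count becomes $N_{n+1}\asymp Q_{n+1}^{2}Q_n^{-\gamma}$ and the same mass-distribution computation now closes for every $s<2/\gamma$ provided $Q_{n+1}$ is a sufficiently high power of $Q_n$. It is exactly at this point that the hypothesis $\gamma>2$ is used in an essential way: two distinct fractions $p_1/q_1\neq p_2/q_2$ with $q_1,q_2\le 2Q_{n+1}$ are separated by at least $1/(q_1q_2)\ge (4Q_{n+1}^{2})^{-1}$, which dominates the interval diameter $2q^{-\gamma}\asymp Q_{n+1}^{-\gamma}$ precisely because $\gamma>2$, so the level-$(n+1)$ intervals are pairwise disjoint. (In a single-denominator construction one never sees $\gamma>2$ enter, which is another sign the exponent cannot come out as $2/\gamma$.) With this modification your outline matches the standard proof of Jarn\'ik's theorem as in Falconer; without it, the lower bound is off by a factor of two in the exponent.
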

\begin{proof}
See \cite[Theorem~10.3]{Falconer}. 
\end{proof}
 In Section~\ref{Section-Propositions}, we will use rational approximations of $a^{1/\alpha}$, and we find infinitely many solutions $(x,y)\in \PS(\alpha)^2$ of the equation $y=ax+b$.

\section{Lemmas}
The goal of this section is to show a series of lemmas so as to evaluate discrepancies and calculate the Hausdorff dimension. 

We write $O(1)$ for a bounded quantity. If this bound depends only on some parameters $a_1,\ldots, a_n$, then for instance we write $O_{a_1,a_2,\ldots, a_n}(1)$. As is customary, we often abbreviate $O(1)X$ and $O_{a_1,\ldots, a_n}(1)X$ to $O(X)$ and $O_{a_1,\ldots, a_n}(X)$ respectively for a non-negative quantity $X$. We also state $f(X) \ll g(X)$ and $f(X) \ll_{a_1,\ldots, a_n} g(X)$  as $f(X)=O(g(X))$ and $f(X)=O_{a_1,\ldots, a_n}(g(X))$ respectively, where $g(X)$ is non-negative. 

\begin{lemma}\label{Lemma-discrepancy}
 For every non-integral $\alpha>1$, integer $k\geq 4$, and real numbers $\eta>0 $ and $V\geq 1$, if $\eta V^{\alpha-k}<1$ holds, then we have
\[
\mathcal{D}((\eta n^\alpha)_{V<n\leq 2V} )\ll_{\alpha,k} (\eta V^{\alpha-k})^{1/(2^k-1)} +\eta^{-1/(2^k-2)}V^{(k-\alpha)/(2^k-2) -2^{2-k}}.
\]
\end{lemma}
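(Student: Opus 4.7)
The plan is to combine the Erd\H{o}s--Tur\'an inequality \eqref{Inequality-Erdos_Turan} with van der Corput's $k$-th derivative test (Lemma~\ref{Lemma-van_der_Corput}) applied to the resulting exponential sums. For an integer $h\geq 1$, set $f_h(x)=h\eta x^{\alpha}$; then
\[
|f_h^{(k)}(x)|=h\eta\,\bigl|\alpha(\alpha-1)\cdots(\alpha-k+1)\bigr|\,x^{\alpha-k}.
\]
On $(V,2V]$ this quantity is monotonic and, with constants depending only on $\alpha$ and $k$, comparable to $\lambda:=h\eta V^{\alpha-k}$. Since $b-a=V\geq 1$, Lemma~\ref{Lemma-van_der_Corput} yields
\[
|S_h|:=\Bigl|\sum_{V<n\leq 2V}\mathrm{e}(h\eta n^{\alpha})\Bigr|
\ll_{\alpha,k} V\lambda^{1/(2^k-2)}+V^{1-2^{2-k}}\lambda^{-1/(2^k-2)}.
\]

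Next, I would apply \eqref{Inequality-Erdos_Turan} with a parameter $M\geq 1$ to be chosen, divide by $\lfloor 2V\rfloor-\lfloor V\rfloor\asymp V$, and plug in the bound for $|S_h|$. Writing $\beta:=1/(2^k-2)$ and using the elementary estimates $\sum_{h=1}^{M}h^{\beta-1}\ll_k M^{\beta}$ and $\sum_{h=1}^{M}h^{-\beta-1}=O_k(1)$, a routine bookkeeping of exponents gives
\[
\mathcal{D}\bigl((\eta n^{\alpha})_{V<n\leq 2V}\bigr)
\ll_{\alpha,k} \frac{1}{M}+M^{\beta}(\eta V^{\alpha-k})^{\beta}
+\eta^{-1/(2^k-2)}V^{(k-\alpha)/(2^k-2)-2^{2-k}}.
\]

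Finally, I would balance the first two terms by taking $M=\lceil(\eta V^{\alpha-k})^{-\beta/(1+\beta)}\rceil$, which is genuinely $\geq 1$ because of the hypothesis $\eta V^{\alpha-k}<1$. Using the identity $\beta/(1+\beta)=1/(2^k-1)$, this choice makes both $1/M$ and $M^{\beta}(\eta V^{\alpha-k})^{\beta}$ of order $(\eta V^{\alpha-k})^{1/(2^k-1)}$, yielding the claimed inequality. There is no serious obstacle here; the points requiring some care are verifying that $f_h^{(k)}$ satisfies the monotonicity and two-sided comparability hypotheses of van der Corput uniformly in $h$, and choosing $M\geq 1$, which is exactly where the assumption $\eta V^{\alpha-k}<1$ is consumed.
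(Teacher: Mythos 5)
Your proposal is correct and follows essentially the same route as the paper: apply the Erd\H{o}s--Tur\'an inequality, bound each exponential sum via van der Corput's $k$-th derivative test with $\lambda_k\asymp_{\alpha,k}h\eta V^{\alpha-k}$, sum over $h$ using $\sum_{h\le M}h^{\beta-1}\ll M^{\beta}$ and $\sum_{h\le M}h^{-\beta-1}=O_k(1)$, and then optimize the cutoff; your choice $M=\lceil(\eta V^{\alpha-k})^{-1/(2^k-1)}\rceil$ is exactly the paper's $m=\lceil(\eta^{-1}V^{k-\alpha})^{1/(2^k-1)}\rceil$.
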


\begin{proof} Fix any $\alpha$, $k$, $\eta$, $V$ given in Lemma~\ref{Lemma-discrepancy} which satisfy $\eta V^{\alpha-k}<1$.  Let $f_h(x)=h\eta x^\alpha$ for every $h\in \mathbb{N}$ and $x>0$. Then we have
$
h\eta V^{\alpha-k}\ll_{\alpha,k} f^{(k)}(x) \ll_{\alpha,k}h\eta V^{\alpha-k}
$
for all $V<x\leq 2V$. Therefore, the following holds from the Erd\H{o}s-Tur\'an inequality \eqref{Inequality-Erdos_Turan} and Lemma~\ref{Lemma-van_der_Corput} with $f=f_h$: for all $m\in \mathbb{N}$,
\begin{align*}
\mathcal{D}((\eta n^\alpha )_{V<n\leq 2V})&\ll m^{-1} +\sum_{h=1}^{m} \frac{1}{h} \left| \frac{1}{V} \sum_{V<x\leq 2V} \mathrm{e}(h\eta x^\alpha ) \right|\\
&\ll_{\alpha, k} m^{-1} +\sum_{h=1}^m \frac{1}{h} \left|\left(h\eta V^{\alpha-k}\right)^{1/(2^k-2)}+V^{-2^{2-k}} \left(h \eta V^{\alpha-k}\right)^{-1/(2^k-2)}\right|\\
&\ll_{\alpha,k} m^{-1} +(m\eta V^{\alpha-k})^{1/(2^k-2)} +\eta^{-1/(2^k-2)}V^{(k-\alpha)/(2^k-2) -2^{2-k}}.
\end{align*}
Hence by substituting $m=\lceil (\eta^{-1} V^{k-\alpha})^{1/(2^k-1)} \rceil $, we get the lemma.
\end{proof}

\begin{lemma}\label{Lemma-discrepancy_of_B}
Let $\alpha>1$ be a non-integral real number, $\gamma\in \mathbb{R}$ with $0<\gamma-\alpha<1$, and let $A>0$ be a real number. Then there exist $Q_0=Q_0(\alpha,\gamma,A)>0$, $\xi_0=\xi_0(\alpha,\gamma)>0$, and $\psi=\psi(\alpha,\gamma)<0$ such that for all $Q\geq Q_0$ and $0<\xi\leq \xi_0$, we have  
\[
\mathcal{D}((A Q^\alpha x^\alpha )_{V<x\leq 2V})\ll_{\alpha,\gamma,A} Q^{\psi}
\]
where $V=Q^{(\gamma-\alpha-\xi)/\alpha}$.
\end{lemma}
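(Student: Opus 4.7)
The plan is to apply Lemma~\ref{Lemma-discrepancy} directly with $\eta=AQ^\alpha$ and $V=Q^{(\gamma-\alpha-\xi)/\alpha}$, and to choose the integer $k\geq 4$ as a function of $(\alpha,\gamma)$ alone so that every exponent of $Q$ that arises is strictly negative for all sufficiently small $\xi$.

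First I would compute $\eta V^{\alpha-k}=AQ^{E_0(k,\xi)}$ with
\[
E_0(k,\xi)=\frac{\alpha^2-(\gamma-\alpha-\xi)(k-\alpha)}{\alpha}.
\]
Since $0<\gamma-\alpha<1$, one checks that $E_0(k,0)<0$ exactly when $k>\alpha+\alpha^2/(\gamma-\alpha)$; this same condition, for $Q$ large depending on $A$, also gives the required hypothesis $\eta V^{\alpha-k}<1$. The $Q$-exponent of the first summand $(\eta V^{\alpha-k})^{1/(2^k-1)}$ in Lemma~\ref{Lemma-discrepancy} is then $E_0(k,\xi)/(2^k-1)$, which is automatically negative.

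Next I would analyze the second summand $\eta^{-1/(2^k-2)}V^{(k-\alpha)/(2^k-2)-2^{2-k}}$, whose $Q$-exponent works out to
\[
E_1(k,\xi)=\frac{(\gamma-\alpha-\xi)(k-\alpha)-\alpha^2}{\alpha(2^k-2)}-\frac{\gamma-\alpha-\xi}{\alpha}\cdot 2^{2-k}.
\]
A direct algebraic manipulation shows that $E_1(k,0)<0$ is equivalent to $k<\alpha+4+\alpha^2/(\gamma-\alpha)-2^{3-k}$. Combined with the lower bound from $E_0$, $k$ must lie in an open interval of length essentially $4$; taking $k$ to be any integer just above $\alpha+\alpha^2/(\gamma-\alpha)$ (bumped up to $4$ if necessary) always works, and this choice depends only on $\alpha$ and $\gamma$.

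With $k=k(\alpha,\gamma)$ so fixed, both $E_0(k,0)$ and $E_1(k,0)$ are strictly negative, so by continuity in $\xi$ there exist $\xi_0=\xi_0(\alpha,\gamma)>0$ and $\psi=\psi(\alpha,\gamma)<0$ for which $E_0(k,\xi),E_1(k,\xi)\leq\psi$ whenever $0<\xi\leq\xi_0$. Taking $Q_0(\alpha,\gamma,A)$ large enough to absorb the $A^{\pm 1/(2^k-2)}$ factors and to ensure $\eta V^{\alpha-k}<1$, Lemma~\ref{Lemma-discrepancy} yields the bound $Q^\psi$. The principal obstacle I anticipate is the bookkeeping verifying that a single integer $k\geq 4$ simultaneously satisfies the opposing bounds coming from $E_0$ and $E_1$; the $-2^{2-k}$ term in van der Corput's $k$th derivative test is precisely what opens up the length-$4$ window that makes this possible.
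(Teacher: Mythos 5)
Your proposal is correct and follows the same strategy as the paper: apply Lemma~\ref{Lemma-discrepancy} with $\eta = AQ^\alpha$, $V = Q^{(\gamma-\alpha-\xi)/\alpha}$, and choose $k = k(\alpha,\gamma) \geq 4$ so that both resulting $Q$-exponents are negative at $\xi = 0$, then invoke continuity in $\xi$. Your window $\alpha + \alpha^2/(\gamma-\alpha) < k < \alpha + \alpha^2/(\gamma-\alpha) + 4 - 2^{3-k}$ is the exact condition, while the paper uses the slightly more conservative (but simpler) condition $\gamma(k-3)/(\gamma+k-3) < \alpha < \gamma k/(k+\gamma)$, equivalent to $\alpha\gamma/(\gamma-\alpha) < k < \alpha\gamma/(\gamma-\alpha) + 3$; both windows have length at least $3$, so both guarantee an admissible integer $k \geq 4$, and your observation that the $-2^{2-k}$ term is what creates this window is precisely the reason the argument works at all.
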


\begin{proof}Take an integer $k\geq 4$ such that
\begin{equation}\label{Inequality-alpha_k}
\frac{\gamma (k-3)} { \gamma+k-3}< \alpha <\frac{\gamma k} { k+\gamma}. 
\end{equation}
Note that there exists such an integer $k$. Let $g(k )=\gamma (k-3)/(\gamma+k-3)$. Then $g(k)$ is strictly increasing for all $k\geq 4$. Since $g(4)=\gamma/(\gamma+1)$ and $\lim_{k\rightarrow \infty } g(k)=\gamma$, we have
\[
\alpha\in (1,\gamma) \subseteq \bigcup_{k=4}^\infty (g(k), g(k+3)).
\]
Therefore, there exists an integer $k\geq 4$ satisfying \eqref{Inequality-alpha_k}. Let us fix such an integer as $k=k(\alpha,\gamma)\geq 4$. By the condition $V= Q^{(\gamma-\alpha-\xi)/\alpha }$, we observe that
\[
A Q^\alpha V^{\alpha-k}=AQ^{\psi_1}
\]
where $\psi_1:=\alpha +(\gamma-\alpha-\xi)(\alpha-k)/\alpha$. Then the inequality $\alpha<\gamma k/(k+\gamma)$ yields that 
\[
\psi_1 <((\gamma+k)\alpha -\gamma k)/\alpha<0. 
\]
Therefore, if $Q_0$ is sufficiently large and $Q\geq Q_0$, then
$A Q^\alpha V^{\alpha-k}<1$. Thus we may apply Lemma~\ref{Lemma-discrepancy} with $\eta=A Q^\alpha$ and $V=Q^{(\gamma-\alpha-\xi)/\alpha }$ to obtain
\begin{align*}
\mathcal{D}((A Q^\alpha x^\alpha )_{V<x\leq 2V})&\ll_{\alpha,\gamma,A} (Q^{\alpha}V^{\alpha-k})^{1/(2k-1)}+Q^{-\alpha/(2^k-2)}V^{(k-\alpha)/(2^k-2) -2^{2-k}}  \\
&= Q^{\psi_1/(2^k-1)} +Q^{\psi_2}  \label{Inequality-discrepancy_on_q_n},
\end{align*}
where 
\[
\psi_2:=-\frac{\alpha}{2^k-2}+\frac{\gamma-\alpha-\xi}{\alpha} \left(\frac{k-\alpha}{2^k-2}-\frac{4}{2^k}  \right).
\]
Then we have
\begin{align*}
\psi_2&=\frac{-\alpha^22^k +(\gamma-\alpha)(k-\alpha)2^k -4(\gamma-\alpha)(2^k-2) }{2^k(2^k-2)\alpha}+O_{\alpha,\gamma}(\xi)\\
&=\frac{(\gamma k-(\gamma+k-4)\alpha-4\gamma )2^k +8(\gamma-\alpha) }{2^k(2^k-2)\alpha}+O_{\alpha,\gamma}(\xi).
\end{align*}
 Therefore the inequalities $\alpha> \gamma(k-3)/(\gamma+k-3)$ and $\gamma-\alpha<1$ imply that for sufficiently small $\xi>0$,
\begin{align*}
\psi_2&<\frac{-2^{k}\cdot \gamma^2 /(\gamma+k-3) +8  }{2^k(2^k-2)\alpha}+O_{\alpha, \gamma}(\xi)\\
&<\frac{-2^{k}/(k-2) +8  }{2^{k+1}(2^k-2)\alpha}\leq 0.
\end{align*}
Therefore, there exists $\psi=\psi(\alpha,k)<0$ so that  $\mathcal{D}((A Q^\alpha x^\alpha )_{V<x\leq 2V})\ll_{\alpha, \gamma, A} Q^{\psi}$.
\end{proof}
We next present lemmas on the Hausdorff dimension.
\begin{lemma}\label{Lemma-GenJarnik}
For all non-empty and bounded open intervals $J\subseteq \mathbb{R}$, we have
 \[
 \dim_{\mathrm{H}} \mathcal{A}(J,\gamma)=2/\gamma.
 \]
\end{lemma}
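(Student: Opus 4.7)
The plan is to reduce to Theorem~\ref{Theorem-Jarnik} by combining integer translation (for the upper bound) and an affine rescaling by an integer $Q$ (for the lower bound). The key observation underlying both directions is that $\mathcal{A}([n,n+1],\gamma)=n+\mathcal{A}([0,1],\gamma)$ for every $n\in\mathbb{Z}$, since a rational approximation $|x-p/q|\le q^{-\gamma}$ is transformed under translation by $n$ into $|(x+n)-(p+nq)/q|\le q^{-\gamma}$.

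For the upper bound, I would cover the bounded interval $J$ by the countable family $\{[n,n+1]:n\in\mathbb{Z}\}$ and combine monotonicity of Hausdorff dimension, countable stability, bi-Lipschitz invariance under integer translation, and Theorem~\ref{Theorem-Jarnik} to obtain
\[
\dim_{\mathrm{H}}\mathcal{A}(J,\gamma)\le \sup_{n\in\mathbb{Z}}\dim_{\mathrm{H}}\mathcal{A}([n,n+1],\gamma)=2/\gamma.
\]

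For the lower bound, I would pick an integer $Q$ so large that $1/Q$ is smaller than the length of $J$, then choose $p\in\mathbb{Z}$ so that $[p/Q,(p+1)/Q]\subseteq J$, and consider the bi-Lipschitz affine map $\phi(x)=p/Q+x/Q$ from $[0,1]$ into $J$. Any rational approximation $|y-p'/q'|\le q'^{-\gamma}$ of $y\in[0,1]$ rescales to
\[
\left|\phi(y)-\frac{pq'+p'}{Qq'}\right|\le \frac{1}{Qq'^{\gamma}}=\frac{Q^{\gamma-1}}{(Qq')^{\gamma}},
\]
which misses the condition defining $\mathcal{A}(J,\gamma)$ by the factor $Q^{\gamma-1}$. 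To dispose of this constant, I would upgrade the exponent on the $[0,1]$-side: for any $\epsilon>0$ and $y\in\mathcal{A}([0,1],\gamma+\epsilon)$, once $q'^{\epsilon}\ge Q^{\gamma-1}$ the rescaled approximation is genuinely within $1/(Qq')^{\gamma}$ of $\phi(y)$. Hence $\phi(\mathcal{A}([0,1],\gamma+\epsilon))\subseteq\mathcal{A}(J,\gamma)$, and bi-Lipschitz invariance together with Theorem~\ref{Theorem-Jarnik} yields
\[
\dim_{\mathrm{H}}\mathcal{A}(J,\gamma)\ge \dim_{\mathrm{H}}\mathcal{A}([0,1],\gamma+\epsilon)=2/(\gamma+\epsilon).
\]
Letting $\epsilon\to 0^+$ closes the gap.

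The only non-routine point is this lower bound: the rescaling by $1/Q$ unavoidably produces the multiplicative constant $Q^{\gamma-1}$ in the approximation inequality, and the $\epsilon$-perturbation trick is precisely what absorbs it, since for large denominators the factor $q'^{\epsilon}$ swallows any fixed constant. Verifying that distinct pairs $(p',q')$ give rise to distinct approximants $(pq'+p',Qq')$ of $\phi(y)$, the countable stability of Hausdorff dimension, and the bi-Lipschitz invariance via Lemma~\ref{Lemma-C1_invariance} are all routine.
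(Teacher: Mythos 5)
Your proof is correct. The lower bound is essentially the paper's argument: both pick a rational subinterval $[p/Q,(p+1)/Q]\subseteq J$ (the paper writes $[\ell/M,(\ell+1)/M]$), rescale by the affine map $x\mapsto (p+x)/Q$, and absorb the resulting constant $Q^{\gamma-1}$ by working on $\mathcal{A}([0,1],\gamma+\epsilon)$ and letting $\epsilon\to 0^+$; you spell out the ``$q'^\epsilon$ swallows the constant'' step more explicitly than the paper does, which is a small but real clarification since the paper's displayed inequality $\tfrac{1}{Mq^{\gamma+\epsilon}}<\tfrac{1}{q^{\gamma}}$ quietly glosses over the fact that the denominator of the new rational is $qM$ rather than $q$. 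Your upper bound, however, takes a genuinely different route. The paper encloses $J$ in a single interval $[m,m+h]$, rescales by $x\mapsto(x-m)/h$, and pays for the non-unit length $h$ with a second $\epsilon$-perturbation, showing $f(\mathcal{A}(J,\gamma))\subseteq\mathcal{A}([0,1],\gamma-\epsilon)$ and taking $\epsilon\to 0^+$. You instead decompose $\mathcal{A}(J,\gamma)\subseteq\bigcup_{n\in\mathbb{Z}}\mathcal{A}([n,n+1],\gamma)$, use that $\mathcal{A}([n,n+1],\gamma)=n+\mathcal{A}([0,1],\gamma)$ (an isometry, since the approximation condition is translation-invariant by integers), and then invoke countable stability of Hausdorff dimension. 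This is cleaner: it needs no $\epsilon$ on the upper-bound side and doesn't depend on $J$ being bounded. The only thing to note is that countable stability, though entirely standard, is not among the two properties the paper explicitly lists (monotonicity and bi-Lipschitz invariance), so a referee following the paper's self-contained style might ask you to state it; it is an immediate consequence of monotonicity together with $\mathcal{H}^s(\bigcup_n F_n)\le\sum_n\mathcal{H}^s(F_n)$.
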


\begin{proof}There exist $m\in \mathbb{Z}$ and $h\in \mathbb{N}$ such that $J\subseteq [m,m+h]$. When $x\in \mathcal{A} (J,\gamma)$, there are infinitely many $(p,q) \in \mathbb{Z}\times \mathbb{N}$ such that $\left|x-p/q \right|\leq q^{-\gamma}$. Then for all $\epsilon>0$, and for infinitely many $(p,q)\in \mathbb{Z}\times \mathbb{N}$,
\[
\left| \frac{x-m}{h} - \frac{p-mq }{qh} \right|\leq \frac{1}{hq^{\gamma}}\leq \frac{1}{q^{\gamma-\epsilon}}.
\]
Thus $f(\mathcal{A}(J,\gamma)) \subseteq \mathcal{A}([0,1],\gamma-\epsilon)$ where $f(x)=(x-h)/m$. By the bi-Lipschitz invariance and monotonicity of the Hausdorff dimension and Theorem~\ref{Theorem-Jarnik}, we obtain 
\[
\Haus \mathcal{A}(J,\gamma) = \Haus  f(\mathcal{A}(J,\gamma)) \leq \dim_{\mathrm{H}} \mathcal{A}([0,1],\gamma-\epsilon)=\frac{2}{\gamma-\epsilon}.
\]
By taking $\epsilon\rightarrow +0$, $\Haus \mathcal{A}(J,\gamma) \leq 2/\gamma$.

We next show that $\Haus \mathcal{A}(J,\gamma) \geq 2/\gamma$. There exist $\ell \in \mathbb{Z}$ and $M\in \mathbb{N}$ such that $J\supseteq [\ell/M, (\ell+1)/M]$. Take such $\ell$ and $M$.  Then for all $x\in \mathcal{A} ([0,1],\gamma+\epsilon)$, there are infinitely many $(p,q) \in \mathbb{Z}\times \mathbb{N}$ such that $\left|x-p/q \right|\leq q^{-\gamma-\epsilon}$. Then for all $\epsilon>0$, and for infinitely many $(p,q)\in \mathbb{Z}\times \mathbb{N}$, we have
\[
\left|\frac{\ell+x}{M} -\frac{\ell q+p}{qM} \right|\leq \frac{1}{M}\frac{1}{q^{\gamma+\epsilon}} <\frac{1}{q^\gamma}.
\]
This inequality and $(\ell+x)/M\in J$ imply that $g(\mathcal{A}([0,1],\gamma+\epsilon)) \subseteq \mathcal{A} (J,\gamma)$ where $g(x)=(x+\ell)/M$.  By the monotonicity and bi-Lipschitz invariance of the Hausdorff dimension and Theorem~\ref{Theorem-Jarnik}, we have $\Haus \mathcal{A} (J,\gamma ) \geq 2/(\gamma+\epsilon)$ for all $\epsilon>0$. Hence by taking $\epsilon\rightarrow +0$, we conclude $\Haus \mathcal{A} (J,\gamma ) \geq 2/\gamma$.
\end{proof}

\begin{lemma}\label{Lemma-application_Jarnik}
Let $I\subseteq (1,\infty)$ be a non-empty and bounded open interval, and let $\gamma> 2$ and $a>0$ be real numbers with $a\neq 1$. Define
\begin{align*}
\mathcal{E}(I,\gamma;a)=\{\alpha\in I \colon &\text{there are infinitely many $(p,q)\in \mathbb{Z}\times \mathbb{N}$} \\
&\text{ such that } \left|a^{1/\alpha}-\frac{p}{q} \right|\leq \frac{1}{q^{\gamma}}\}.
\end{align*}
Then we have 
$
\Haus \mathcal{E}(I,\gamma;a)=2/\gamma.
$
\end{lemma}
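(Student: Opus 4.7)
The plan is to reduce Lemma~\ref{Lemma-application_Jarnik} to Lemma~\ref{Lemma-GenJarnik} by pushing forward the set $\mathcal{E}(I,\gamma;a)$ under the map $\alpha\mapsto a^{1/\alpha}$ and invoking the bi-Lipschitz invariance of Hausdorff dimension in the form of Lemma~\ref{Lemma-C1_invariance}. Let $f\colon (1,\infty)\to\mathbb{R}$ be defined by $f(\alpha)=a^{1/\alpha}$. Since $a\neq 1$, we have $f'(\alpha)=-a^{1/\alpha}(\log a)/\alpha^2\neq 0$ on $(1,\infty)$, so $f$ is a $C^1$ diffeomorphism from $I$ onto the open interval $f(I)$. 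The key tautology is
\[
f(\mathcal{E}(I,\gamma;a))=f(I)\cap \mathcal{A}(\mathbb{R},\gamma)=\mathcal{A}(f(I),\gamma),
\]
since the Diophantine condition defining $\mathcal{E}$ is exactly the condition that $f(\alpha)\in \mathcal{A}(\mathbb{R},\gamma)$.

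For the upper bound, I would exhaust $I$ by an increasing sequence of compact subintervals $K_n\subseteq I$. On each $K_n$, $f$ has strictly positive (or strictly negative) continuous derivative, so Lemma~\ref{Lemma-C1_invariance} gives
\[
\dim_{\mathrm{H}}\!\bigl(\mathcal{E}(I,\gamma;a)\cap K_n\bigr)=\dim_{\mathrm{H}} f\bigl(\mathcal{E}(I,\gamma;a)\cap K_n\bigr)\leq \dim_{\mathrm{H}}\mathcal{A}(f(I),\gamma)=\tfrac{2}{\gamma},
\]
using Lemma~\ref{Lemma-GenJarnik} in the last step (note $f(I)$ is a non-empty bounded open interval because $I$ is and $f$ is a bounded homeomorphism). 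Countable stability of Hausdorff dimension then yields $\dim_{\mathrm{H}}\mathcal{E}(I,\gamma;a)\leq 2/\gamma$.

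For the lower bound, pick any open subinterval $J'$ of $I$ whose closure $\overline{J'}$ is compact and contained in $I$. Then $f(J')$ is a non-empty bounded open interval, so Lemma~\ref{Lemma-GenJarnik} gives $\dim_{\mathrm{H}}\mathcal{A}(f(J'),\gamma)=2/\gamma$. Since $\mathcal{A}(f(J'),\gamma)=f(J')\cap\mathcal{A}(\mathbb{R},\gamma)=f(J'\cap \mathcal{E}(I,\gamma;a))$, Lemma~\ref{Lemma-C1_invariance} applied with $V=\overline{J'}$ transfers this dimension back to the preimage:
\[
\dim_{\mathrm{H}}\bigl(J'\cap \mathcal{E}(I,\gamma;a)\bigr)=\dim_{\mathrm{H}}\mathcal{A}(f(J'),\gamma)=\tfrac{2}{\gamma},
\]
and by monotonicity $\dim_{\mathrm{H}}\mathcal{E}(I,\gamma;a)\geq 2/\gamma$.

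There is no genuinely hard step here; the only subtlety is that Lemma~\ref{Lemma-C1_invariance} is stated for sets contained in a compact $V$ rather than in the open interval $I$, which is why I exhaust $I$ by compact subintervals in the upper bound and shrink to $\overline{J'}\subseteq I$ in the lower bound. Everything else is a direct translation of the Diophantine condition through the strictly monotone smooth change of variable $\alpha\mapsto a^{1/\alpha}$.
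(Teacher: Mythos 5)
Your proof is correct and follows essentially the same route as the paper: both conjugate by $f(\alpha)=a^{1/\alpha}$, observe the tautology $f(\mathcal{E}(I,\gamma;a))=\mathcal{A}(f(I),\gamma)$, and transfer the dimension via Lemma~\ref{Lemma-C1_invariance} and Lemma~\ref{Lemma-GenJarnik}. The only difference is your exhaustion of $I$ by compact subintervals; the paper avoids this by choosing a single compact set $V$ with $I\subseteq V\subseteq(0,\infty)$ (for instance $V=\overline{I}$, which works because $I$ is a bounded interval in $(1,\infty)$ and $f$ is $C^1$ with nonvanishing derivative on all of $(0,\infty)$), so Lemma~\ref{Lemma-C1_invariance} applies in one shot with $F=\mathcal{E}(I,\gamma;a)\subseteq V$.
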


\begin{proof}
For all $u>0$, let $f(u)=a^{1/u}$. Fix a compact set $V\subseteq \mathbb{R}$ with $I \subseteq V$. Clearly, $f$ is continuously differentiable and $|f'(u)|>0$ for all $u\in V$.  By the definitions, $f(\mathcal{E}(I,\gamma;a))=\mathcal{A}(f(I),\gamma)$. Since $f(I)$ is also a bounded open interval, Lemma~\ref{Lemma-C1_invariance} and Lemma~\ref{Lemma-GenJarnik} imply that
\[
\dim_{\mathrm{H}} \mathcal{E}(I,\gamma; a)=\dim_{\mathrm{H}} f(\mathcal{E}(I,\gamma;a))=\dim_{\mathrm{H}} \mathcal{A}(f(I),\gamma)=\frac{2}{\gamma}.  
\]
\end{proof}

\section{Key Propositions}\label{Section-Propositions}
In this section, we show two key propositions by applying rational approximations. 
\begin{proposition}\label{Proposition-upper}Let $a,b\in \mathbb{R}$ with $a\neq 1$ and $a>0$. For all $1\leq \beta <\gamma$, we have 
\[
\{\alpha\in (\beta,\gamma) \colon y=ax+b \text{ is solvable in } \mathrm{PS}(\alpha) \}
\subseteq \mathcal{E}((\beta,\gamma),\beta; a).
\]
\end{proposition}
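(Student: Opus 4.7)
Suppose $\alpha \in (\beta,\gamma)$ is such that $y=ax+b$ is solvable in $\mathrm{PS}(\alpha)$. The idea is to convert any pair of solutions $(x,y)\in \mathrm{PS}(\alpha)^2$ into a rational approximation of $a^{1/\alpha}$, and then to check that its accuracy meets the standard required by $\mathcal{E}((\beta,\gamma),\beta;a)$. Listing infinitely many distinct solutions $(x_n,y_n)$, the equation $y_n=ax_n+b$ forces the $x_n\in\mathbb{N}$ to be distinct, so $x_n\to\infty$. Choose $p_n,q_n\in\mathbb{N}$ with $x_n=\lfloor p_n^\alpha\rfloor$ and $y_n=\lfloor q_n^\alpha\rfloor$; since distinct $x_n$ can only arise from distinct $p_n$, the $p_n$ are also distinct and tend to infinity.

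Next I would relate $q_n$ and $p_n$ to $a^{1/\alpha}$. Using $|t-\lfloor t\rfloor|<1$ in $\lfloor q_n^\alpha\rfloor = a\lfloor p_n^\alpha\rfloor + b$ gives $|q_n^\alpha - a p_n^\alpha|\leq |a|+|b|+1$, so upon dividing by $p_n^\alpha$,
\[
\left(\frac{q_n}{p_n}\right)^{\!\alpha} = a + O_{a,b}\!\left(p_n^{-\alpha}\right).
\]
Because $a>0$, this forces $q_n/p_n\to a^{1/\alpha}$, and in particular for all sufficiently large $n$ the ratios $q_n/p_n$ lie in a fixed compact subinterval of $(0,\infty)$ on which $t\mapsto t^\alpha$ is bi-Lipschitz.

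The final step is a clean transfer from the $\alpha$-power estimate to a linear one. The mean value theorem applied to $t\mapsto t^\alpha$ on the compact interval above yields
\[
\left|\frac{q_n}{p_n} - a^{1/\alpha}\right| \ll_{a,\alpha} \left|\left(\frac{q_n}{p_n}\right)^{\!\alpha} - a\right| \ll_{a,b,\alpha} p_n^{-\alpha}.
\]
Since $\alpha>\beta$, we have $p_n^{\alpha-\beta}\to\infty$, so for all large $n$ the right-hand side is at most $p_n^{-\beta}$. The infinitely many distinct pairs $(q_n,p_n)\in\mathbb{Z}\times\mathbb{N}$ therefore witness $\alpha\in\mathcal{E}((\beta,\gamma),\beta;a)$, which is the desired inclusion. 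The only subtle point is to keep $q_n/p_n$ bounded away from $0$ and $\infty$ so that the factor $\alpha\xi^{\alpha-1}$ produced by the mean value theorem is controlled from both sides; this is immediate from the convergence $q_n/p_n\to a^{1/\alpha}>0$ established in the middle step, so no real obstacle arises.
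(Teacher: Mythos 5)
Your proof is correct and takes essentially the same approach as the paper: extract infinitely many pairs $(q_n, p_n)$ from the solutions in $\PS(\alpha)$, observe that $q_n/p_n$ approximates $a^{1/\alpha}$ to order $p_n^{-\alpha}$, and conclude $\alpha \in \mathcal{E}((\beta,\gamma),\beta;a)$ since $\alpha>\beta$. The paper compresses the middle step by writing $p/q = \bigl(a + O(q^{-\alpha})\bigr)^{1/\alpha} = a^{1/\alpha} + O_{a,b}(q^{-\alpha})$ in one line, whereas you first estimate $(q_n/p_n)^\alpha - a$ and then transfer via the mean value theorem; these are the same computation, and your version is if anything slightly more explicit about why the implied constant can be taken uniform (by noting $q_n/p_n$ stays in a compact subinterval of $(0,\infty)$).
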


\begin{proof}Fix $\beta,\gamma\in \mathbb{R}$ with $1\leq \beta <\gamma$. Take any $\alpha\in (\beta,\gamma)$ such that the equation $y=ax+b$ is solvable in $\mathrm{PS}(\alpha)$. Then there are infinitely many $(p,q)\in \mathbb{N}\times \mathbb{N}$ such that $\lfloor p^\alpha \rfloor  = a\lfloor q^\alpha \rfloor+ b$, which implies that
\[
\frac{p}{q}=\left(a+\frac{b+\{p^\alpha\}-a\{q^\alpha\}}{ q^\alpha}\right)^{1/\alpha} =a^{1/\alpha} + O_{a,b}(q^{-\alpha}).
\]
Hence, there exist $C=C(a,b)>0$ such that for infinitely many $(p,q)\in \mathbb{N}^2$, 
\[
\left|a^{1/\alpha} - \frac{p}{q}\right| \leq \frac{C}{q^{\alpha}} \leq \frac{1}{q^{\beta}}.
\]
This yields that $\alpha \in \mathcal{E}((\beta,\gamma),\beta;a)$.
\end{proof}

\begin{proposition}\label{Proposition-lower}Let $a,b\in \mathbb{R}$ with $a\neq 1$ and $0\leq b<a$. Suppose that $y=ax+b$ is solvable in $\mathbb{N}$. Then for all $1\leq \beta <\gamma$ with $\lfloor \beta \rfloor < \beta<\gamma < \lfloor \beta \rfloor+1$, we have 
\[
 \mathcal{E}((\beta,\gamma),\gamma;a)\subseteq \{\alpha\in (\beta,\gamma) \colon y=ax+b \text{ is solvable in } \mathrm{PS}(\alpha) \}.
\]
\end{proposition}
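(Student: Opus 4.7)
The plan is to exploit a good rational approximation $p/q\approx a^{1/\alpha}$ coming from the hypothesis $\alpha\in\mathcal{E}((\beta,\gamma),\gamma;a)$ and manufacture infinitely many solution pairs in $\PS(\alpha)^2$ of the form $(\lfloor(jq)^\alpha\rfloor,\lfloor(jp)^\alpha\rfloor)$ as $j$ varies over a dyadic range, essentially reversing the argument of Proposition~\ref{Proposition-upper}. The three stages will be: reduce $\lfloor(jp)^\alpha\rfloor=a\lfloor(jq)^\alpha\rfloor+b$ to a fractional-part condition, bound the approximation error, and invoke Lemma~\ref{Lemma-discrepancy_of_B} to count admissible $j$.

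Before beginning, I would note that solvability of $y=ax+b$ in $\mathbb{N}$ forces $a,b\in\mathbb{Q}$ (otherwise at most one natural pair could solve it), so I may write $a=A/B$ in lowest terms with $A,B\in\mathbb{N}$ and $b=c/B$; the hypothesis $0\le b<a$ then gives $0\le c<A$. For any $(p,q)\in\mathbb{Z}\times\mathbb{N}$ with $|a^{1/\alpha}-p/q|\le q^{-\gamma}$, a first-order Taylor expansion about $a^{1/\alpha}$ yields $|\delta|\ll_{a,\alpha} q^{\alpha-\gamma}$ for $\delta:=Bp^\alpha-Aq^\alpha$, so $B(jp)^\alpha-A(jq)^\alpha=j^\alpha\delta$ stays small whenever $j^\alpha\ll q^{\gamma-\alpha}$.

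The most intricate step is the translation of the equation into a condition purely on $\theta_j:=\{(jq)^\alpha/B\}\in[0,1)$. Writing $(jq)^\alpha=BN_j+B\theta_j$, one verifies the identity $(jp)^\alpha=AN_j+A\theta_j+j^\alpha\delta/B$, whereupon the equation decouples into: (i) $a\lfloor(jq)^\alpha\rfloor+b\in\mathbb{Z}$, which pins $\lfloor B\theta_j\rfloor$ to the unique residue $r_0\in\{0,\dots,B-1\}$ satisfying $Ar_0+c\equiv 0\pmod B$; and (ii) $\lfloor A\theta_j+j^\alpha\delta/B\rfloor=s_0$ with $s_0:=(Ar_0+c)/B$. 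Intersecting (i) and (ii) forces $\theta_j$ into a sub-interval $I_{q,j}\subseteq[0,1)$ of length at least $\min((a-b)/A,\,1/A)$, shifted by $O(j^\alpha|\delta|/B)$. The strict inequality $a>b$ from the hypothesis is precisely what makes this length positive.

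Finally, I would set $V=q^{(\gamma-\alpha-\xi)/\alpha}$ for sufficiently small $\xi\in(0,\gamma-\alpha)$ and invoke Lemma~\ref{Lemma-discrepancy_of_B} with its coefficient $A$ taken to be $1/B$ and with $Q=q$, bounding the discrepancy of $\bigl((q^\alpha/B)j^\alpha\bigr)_{V<j\le 2V}$ by $q^\psi$ for some $\psi<0$. Since $j\le 2V$ forces $j^\alpha|\delta|\ll q^{-\xi}\to 0$, the intervals $I_{q,j}$ all contain a common sub-interval of length $\ge (1/2)\min((a-b)/A,\,1/A)$ for $q$ large, and the discrepancy bound then supplies $\gg V$ indices $j$ with $\theta_j\in I_{q,j}$. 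Each such $j$ yields a pair of distinct elements of $\PS(\alpha)$ solving $y=ax+b$ (distinctness for $q$ large follows from $a\neq 1$, which forces $p\neq q$), and letting $(p,q)$ range over the infinitely many approximating pairs completes the construction. I expect the main obstacle to be the translation in the third paragraph: since $a$ and $b$ are only rational, the equation simultaneously imposes a residue-class constraint mod $B$ and a fractional-part constraint, and one must verify that together they collapse to a positive-length interval condition in the single quantity $\theta_j$.
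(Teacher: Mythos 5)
Your proposal is correct and follows essentially the same route as the paper's own argument: both reduce to rational $a$ and $b$, exploit a rational approximation $p/q \approx a^{1/\alpha}$, invoke Lemma~\ref{Lemma-discrepancy_of_B} over the dyadic block $(V,2V]$ with $V = q^{(\gamma-\alpha-\xi)/\alpha}$, and reduce the Diophantine condition to a fractional-part constraint on a positive-length interval whose positivity is precisely where $0 \le b < a$ enters. The paper clears denominators to a single equation $cy - dx = e$ and uses one interval $I$ determined by a particular solution $(u,v)$ with $0 \le v < c$, whereas you keep $a = A/B$ and split the constraint into your residue condition~(i) and fractional-part condition~(ii); this is the same intersection of arithmetic-progression constraints in a slightly different notation, and the $\epsilon$-shrinking of $I$ in the paper plays exactly the role of your observation that the shift $j^\alpha\delta/B \ll q^{-\xi}$ leaves a common sub-interval for all $j$ in the block.
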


\begin{proof}
Since the equation $y=ax+b$ is solvable in $\mathbb{N}$, there exist distinct solutions $(x_1,y_1), (x_2,y_2)\in \mathbb{N}^2$ of the equation. Then since $y_2-y_1=a(x_2-x_1)$ and $(x_1,y_1)\neq (x_2,y_2)$, we have $a\in \mathbb{Q}$. In addition, $b\in \mathbb{Q}$ holds from $b=y_1-ax_1$. Thus we may let $a=a_1/a_2$, $b=b_1/b_2$, ($a_1,a_2,b_2\in\mathbb{N}$, $b_1\in \mathbb{N}\cup \{0\}$).  By letting $c=a_2b_2$, $d=a_1b_2$, $e=a_2b_1$, a pair $(x,y)\in \mathbb{N}^2$ satisfies the equation $cy-dx=e$ if and only if $(x,y)$ satisfies the equation $y=ax+b$. Therefore we now discuss the solvability in $\PS(\alpha)$ of the equation $cy-dx=e$. Take any $\alpha\in \mathcal{E}((\beta,\gamma),\gamma;a)=\mathcal{E}((\beta,\gamma),\gamma;d/c)$. Let us show that the equation $cy-dx=e$ is solvable in $\PS(\alpha)$.

By the definition, there is a sequence $((p_n,q_n))_{n=1}^\infty \in (\mathbb{Z}\times \mathbb{N})^\mathbb{N}$ such that for all $n\in \mathbb{N}$, 
\[
\left|(d/c)^{1/\alpha}-p_n/q_n\right|<{q_n^{-\gamma}},
\]
where $q_1<q_2<\cdots$. Since $(d/c)^{1/\alpha}>0$ and $d/c\neq 1$, there exists $n_0=n_0(d,c)\in \mathbb{N}$ such that for all $n\geq n_0$, we obtain $p_n>0$ and $p_n\neq q_n$.

From the solvability in $\mathbb{N}$, there exist $u,v\in \mathbb{N}$ such that $cu-dv=e$. By the division algorithm, there exist $r,v'\in \mathbb{Z}$ such that $v=cr+v'$, $0\leq v'<c$. Hence by replacing $u-dr$ and $v'$ with $u$ and $v$ respectively, we obtain
\begin{equation}\label{Equation-cde}
cu-dv=e,\quad 0\leq v<c.
\end{equation}

Take sufficiently small parameters $\xi=\xi(\alpha,\gamma)>0$ and $\epsilon\in (0,1-e/d)$, and take a sufficiently large parameter $n_1=n_1(\alpha,\gamma,c,d,\epsilon)\in \mathbb{N}$. Note that we verify the existence of $\epsilon$ since $e<d$. Take $n \in \mathbb{N}$ with $n\geq n_1$. Let $V_n= q_n^{(\gamma-\alpha-\xi)/\alpha } $. Define 
\begin{gather*}
I=\left[ \frac{v}{c},\ \frac{v}{c}+\frac{1}{c} \right) \cap \left[\frac{u}{d}+\frac{\epsilon}{d},\ \frac{u}{d}+\frac{1}{d}-\frac{\epsilon}{d} \right),\ B_n=\left\{x\in \mathbb{N} \colon \left\{ \frac{(q_nx)^\alpha}{c}  \right\}\in I  \right\}.
\end{gather*}
If $n_1$ is large enough and $\xi$ is small enough, then by the definition of the discrepancy and Lemma~\ref{Lemma-discrepancy_of_B} with $V=V_n$, $Q=q_n$, $A=1/c$, there exists $\psi=\psi(\alpha,\gamma)<0$ such that 
\[
\#(B_n\cap (V_n,2V_n])/V_n = \mathrm{diam}(I\cap [0,1) ) +O_{\alpha,\gamma,c}(q_{n}^{\psi}).
\]
Here we show $\mathrm{diam}(I\cap [0,1) )>0$. Indeed, by \eqref{Equation-cde}, we obtain 
\[
\frac{u}{d} +\frac{\epsilon}{d}=\frac{v}{c}+\frac{e}{cd}+\frac{\epsilon}{d}> \frac{v}{c}\geq 0.
\]
Moreover, the inequality $\epsilon<1-e/d$ yields that
\[
\frac{u}{d}+\frac{\epsilon}{d} = \frac{v}{c}+\frac{1}{c} + \frac{e-d}{cd}+\frac{\epsilon}{c} < \frac{v}{c}+\frac{1}{c}\leq 1.
\]
Hence $\mathrm{diam}(I\cap [0,1) )>0$. Therefore there exists a large enough $n_1=n_1(\alpha,\gamma,c, d,e,\epsilon)\in \mathbb{N}$ such that for all $n\geq n_1$, we get $\#(B_n\cap (V_n,2V_n])/V_n \geq  \mathrm{diam}(I\cap [0,1) )/2$,
which means that $B_n\cap (V_n,2V_n]$ is non-empty.

Hence we may take $x\in B_n\cap (V_n,2V_n]$ where $n\geq n_1$. Then
\[
 (q_nx)^\alpha= c  \left\lfloor \frac{(q_nx)^\alpha}{c} \right\rfloor +c \left\{ \frac{(q_nx)^\alpha}{c} \right\}
 = \left(c  \left\lfloor \frac{(q_nx)^\alpha}{c} \right\rfloor+v\right) +\left(c \left\{ \frac{(q_nx)^\alpha}{c} \right\}-v\right).
\]
The first term on the most right-hand side is an integer, and the second is in $[0,1)$ from the definition of $B_n$. Therefore we have
\[
\lfloor (q_nx)^\alpha \rfloor=   c  \left\lfloor \frac{(q_nx)^\alpha}{c} \right\rfloor+v.
\]
Let $\theta=p_n/q_n-(d/c)^{1/\alpha}$. By the mean value theorem, there exist $C=C(c,d,\alpha)>0$ and $\theta'\in \mathbb{R}$ with $|\theta'|\leq |\theta|$ such that
$
\left(p_n/q_n\right)^\alpha=( \left(d/c\right)^{1/\alpha} +\theta )^\alpha=d/c+C\theta'.
$
Therefore,
\begin{align*}
(p_n x)^\alpha&=\left(\frac{p_n}{q_n}\right)^\alpha (q_n x)^\alpha =d \left\lfloor \frac{(q_nx)^\alpha}{c} \right\rfloor+ d \left\{ \frac{(q_nx)^\alpha}{c} \right\}+C\theta' (q_nx)^\alpha\\
&=\left(d \left\lfloor \frac{(q_nx)^\alpha}{c} \right\rfloor+u\right) + \left(d \left\{ \frac{(q_nx)^\alpha}{c} \right\}-u\right)+C\theta' (q_nx)^\alpha.
\end{align*}
The first term on the most right-hand side is an integer, and the second term is in $[\epsilon,1-\epsilon)$ by $x\in B_n$. Further, if necessary, we replace $n_1$ with a larger one, and by $x\in (V_n,2V_n]$, the third term is evaluated by
\[
|C\theta' (q_nx)^\alpha| \leq 2^\alpha C\frac{q_n^\alpha q_n^{\gamma-\alpha-\xi} }{ q_n^\gamma } \leq 2^\alpha Cq_n^{-\xi}\leq 2^\alpha Cq_{n_1}^{-\xi}< \epsilon.
\]
 Hence we obtain
\[
\lfloor (p_nx)^\alpha \rfloor =d \left\lfloor \frac{(q_nx)^\alpha}{c} \right\rfloor+u.
\]
By the above discussion, if $x\in B_n\cap(V_n,2V_n]$ and $n\geq n_1$, then 
\[
c\lfloor (p_nx)^\alpha \rfloor-d\lfloor(q_nx)^\alpha \rfloor= cd  \left\lfloor \frac{(q_nx)^\alpha}{c} \right\rfloor+cu
-dc  \left\lfloor \frac{(q_nx)^\alpha}{c} \right\rfloor-dv=cu-dv=e,
\]
which means that $(\lfloor (q_nx)^\alpha \rfloor, \lfloor (p_nx)^\alpha \rfloor)\in \mathbb{N}^2$ is a solution of the equation $cy-dx=e$. Therefore the equation $cy-dx=e$ is solvable in $\mathrm{PS}(\alpha)$ since $B_n\cap (V_n ,2V_n]$ is non-empty for all $n\geq n_1$.
\end{proof}

\section{Proof of Theorem~\ref{Theorem-main1}}\label{Section-Proof_of_Theorem}
Fix $a,b\in \mathbb{R}$ with $a\neq 1$ and $0\leq b<a$. In the case $\alpha \in (1,2)$, we apply Proposition~\ref{Proposition-lower} with $\beta=1$ and $\gamma=2$. Then
\[
\mathcal{E}((1,2),2;a)\subseteq \{\alpha \in (1,2) \colon \text{$y=ax+b$ is solvable in $\mathrm{PS}(\alpha)$} \}.
\]
By Dirichlet's approximation theorem, $\mathcal{E}((1,2),2;a)=(1,2)$. Therefore the equation $y=ax+b$ is solvable in $\mathrm{PS}(\alpha)$ for all $\alpha\in (1,2) $.


We next discuss the case when $\alpha>2$. Fix $s,t\in \mathbb{R}$ with $2<s<t$. By applying Proposition~\ref{Proposition-upper} with $\beta=s$ and $\gamma=t$, and applying Lemma~\ref{Lemma-application_Jarnik}, we have
\begin{equation}\label{Inequality-upper_bound_for_main1}
\dim_{\mathrm{H}} \{\alpha \in (s,t) \colon \text{$y=ax+b$ is solvable in $\mathrm{PS}(\alpha)$} \}\leq\dim_{\mathrm{H}} \mathcal{E}((s,t),t;a)= \frac{2}{s}.
\end{equation}
Further, let $\delta>0$ be an arbitrarily small parameter. By applying Proposition~\ref{Proposition-lower} with $\beta=s$ and $\gamma=\min\{s+\delta,\lfloor s\rfloor +1, t \}$, and applying Lemma~\ref{Lemma-application_Jarnik}, we obtain
\begin{align*}
\dim_{\mathrm{H}} \{\alpha \in (s,t) \colon \text{$y=ax+b$ is solvable in $\mathrm{PS}(\alpha)$} \} \geq \dim_{\mathrm{H}} \mathcal{E}((s,\gamma),\gamma;a)=  \frac{2}{s+\delta}
\end{align*}
for every small enough $\delta>0$. Therefore we get the theorem by taking $\delta\rightarrow +0$.

\begin{remark} Let $\alpha \in (\beta,\gamma)$ where $\beta$ and $\gamma$ satisfy $1\leq \beta <\gamma$ and $\lfloor \beta \rfloor <\beta <\gamma <\lfloor\beta \rfloor+1$. If $a^{1/\alpha} \in \mathbb{Q}$, then it is clear that for infinitely many $(p,q)\in \mathbb{Z}\times \mathbb{N}$ we have $| a^{1/\alpha}-p/q | \leq q^{-\gamma}$. By Proposition~\ref{Proposition-lower}, the equation $y=ax+b$ is solvable in $\PS(\alpha)$. Therefore, for all $a,b\in \mathbb{R}$ with $a\neq 1$ and $0\leq b<a$, and for all non-integral $\alpha>1$ satisfying $a^{1/\alpha} \in \mathbb{Q}$, the equation $y=ax+b$ is solvable in $\PS(\alpha)$. 
\end{remark}

\begin{remark}We apply Proposition~\ref{Proposition-upper} and Lemma~\ref{Lemma-application_Jarnik} to show the inequality \eqref{Inequality-upper_bound_for_main1}. Note that the condition $0\leq b<a$ is not required in Proposition~\ref{Proposition-upper} and Lemma~\ref{Lemma-application_Jarnik}. Hence, for all $a, b\in \mathbb{R}$ with $a\neq 1$ and $a>0$, and for all $s,t\in \mathbb{R}$ with $2< s<t$, we obtain
\[
\dim_{\mathrm{H}} \{\alpha \in (s,t) \colon \text{$y=ax+b$ is solvable in $\mathrm{PS}(\alpha)$} \}\leq \frac{2}{s}.
\]
\end{remark}

\section*{Acknowledgement}
The author was supported by JSPS KAKENHI Grant Number JP19J20878.
\bibliographystyle{amsalpha}
\bibliography{references_PS}
\end{document}